\documentclass[11pt,reqno]{amsart}
\usepackage[T1]{fontenc}
\usepackage{lmodern}
\usepackage[expansion=false]{microtype}
\usepackage{mathtools}
\usepackage{amssymb}
\usepackage{booktabs}
\usepackage{url}
\usepackage[hidelinks]{hyperref}

\pdftrailerid{}
\hypersetup{pdftitle={A construction of F-irregular graphs},pdfauthor={James Alexander Schreib}}
\numberwithin{equation}{section}
\newtheorem{theorem}{Theorem}[section]
\newtheorem{lemma}[theorem]{Lemma}
\newtheorem{corollary}[theorem]{Corollary}

\newcommand{\doi}[1]{\url{https://doi.org/#1}}

\title{A construction of $F$-irregular graphs}
\author{James Alexander Schreib}
\address{Department of Mathematics, New York University, New York, NY 10012}
\email{jas10320@nyu.edu}
\thanks{ORCID: \url{https://orcid.org/0009-0005-1048-0209}.}
\date{5 September 2026}
\subjclass[2020]{05C07, 05C60, 68V20}
\keywords{$F$-irregular graph, $F$-degree, highly irregular graph, formal verification, Lean}

\begin{document}
\begin{abstract}
For a fixed graph $F$, the $F$-degree of a vertex $v$ in a host graph $H$ is the number of subgraphs of $H$ isomorphic to $F$ that contain $v$. A host is $F$-irregular if its $F$-degrees are pairwise distinct. We show that every finite connected graph $F$ on at least three vertices admits a finite connected $F$-irregular host with at least two vertices. For noncomplete $F$, the proof builds the host as a threshold graph with a deleted edge, augmented by a small incidence gadget whose weighted column sums separate the finitely many vertices the threshold part cannot. The complete case uses the existence theorem of Chartrand, Holbert, Oellermann and Swart (1987). A Lean~4 formalization, taking this published theorem as its sole custom axiom, is described at the end of the paper.
\end{abstract}
\maketitle

\section{Introduction}
\label{sec:intro}

No nontrivial graph has all vertex degrees distinct \cite{behzad1967}.
The search for usable substitutes produced several notions of irregularity:
Chartrand, Erd\H{o}s and Oellermann compared candidate definitions \cite{chartrand1988},
and Alavi, Chartrand, Chung, Erd\H{o}s, Graham and Oellermann introduced highly
irregular graphs, which are connected graphs in which every vertex is adjacent only to vertices of distinct
degree \cite{alavi1987}.
Here we work with the relative notion.
Fix a graph $F$, following Chartrand, Holbert, Oellermann and Swart~\cite{chartrand1987}. For a host graph $H$ and $v \in V(H)$, the \emph{$F$-degree}
$d_F^H(v)$ is the number of subgraphs of $H$ isomorphic to $F$ with $v$ in their
vertex set. The host $H$ is \emph{$F$-irregular} if the values $d_F^H(v)$ are
pairwise distinct. That every finite connected $F$ on at least three vertices admits such a host with at least two vertices was conjectured by Chartrand, Holbert, Oellermann and Swart in 1987~\cite{chartrand1987}.

The complete and star patterns were settled in the original paper~\cite{chartrand1987}.
More recently, Dovzhenok, Filuta and Chuhai~\cite{dovzhenok2024} proved
existence of infinitely many hosts for biconnected patterns of minimum degree
two. Dovzhenok proved the corresponding infinite-family result for patterns
of diameter two~\cite{dovzhenokdiameter2026}, and settled existence for
paths~\cite{dovzhenokpaths2026}, subsequently obtaining infinite families
with simultaneous ordinary and rooted path irregularity~\cite{dovzhenokrooted2026}.
Theorem~\ref{thm:main} establishes existence for all connected patterns of
order at least three, with a connected host.

\begin{theorem}
\label{thm:main}
For every finite connected graph $F$ with at least three vertices there exists
a finite connected $F$-irregular host graph $H$ with at least two vertices.
\end{theorem}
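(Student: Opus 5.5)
We split into the complete case and the noncomplete case. If $F=K_k$ with $k\ge 3$, we invoke the existence theorem of Chartrand, Holbert, Oellermann and Swart~\cite{chartrand1987}. It supplies a $K_k$-irregular host; if that host is disconnected, we take the component containing the vertex of largest $K_k$-degree. Any two vertices of that component have distinct $K_k$-degrees, because copies of a connected $F$ lie inside one component. So the component is still $F$-irregular, and it has at least two vertices once we check that it contains a copy of $K_k$ and $k\ge3$. The rest of the argument handles noncomplete connected $F$ on $k\ge 3$ vertices.

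For noncomplete $F$ we start from a threshold graph $T$ on vertex set $\{v_1,\dots,v_n\}$. It is built by successively adding vertices that are either isolated or dominating, with a long alternating pattern, so that the closed or open neighbourhoods are nested and the degrees take many distinct values. We then delete one edge $e$ of $T$.

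The reason for the deletion is that threshold graphs have twins, and twins have equal $F$-degrees. In a threshold graph the vertices added as consecutive dominating (or consecutive isolated) steps form twin classes. We make every class a singleton except for boundedly many, by forcing alternation. Removing $e$ breaks the remaining symmetry at its endpoints.

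The key step is a monotonicity lemma for $T-e$. If $N[u]\supsetneq N[w]$, or $N(u)\supsetneq N(w)$, then every copy of $F$ through $w$ but not $u$ can be mapped injectively, by swapping $w$ for $u$, to a copy through $u$ but not $w$. To get a strict inequality we also need a copy through $u$ avoiding $w$ with no preimage. Such a copy should exist because $F$ is noncomplete and connected, so it has a vertex that can be placed at $u$ using the extra neighbour. This gives $d_F(u)>d_F(w)$ whenever the neighbourhood containment is strict and the extra neighbour is usable. The strictness is guaranteed for all but a bounded set $S$ of exceptional vertices: those near $e$ and those at the ends of the nesting chain, where the extra neighbour may be unusable, for instance when there are too few vertices to embed $F$. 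This comparison is the main obstacle. It needs care because copies of $F$ are subgraphs, not induced, and the swap must respect the deleted edge.

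To separate the finitely many vertices of $S$, we attach a gadget. We add new vertices $x_1,\dots,x_m$, each joined to the dominating core so that the host stays connected. For each $s\in S$ we choose an incidence pattern joining $s$ to a subset $A_s$ of the $x_j$.

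Each new edge $s x_j$ raises $d_F(s)$ by a computable amount $w_j$, which is the number of new copies of $F$ using that edge. We choose the incidence matrix so that the weighted column sums $\sum_{j\in A_s} w_j$ are distinct. They must also be large compared with the $F$-degree variation inside $S$, and must not push any $s$ onto the $F$-degree of a non-exceptional vertex. For example, we can use $A_s$ of distinct sizes, with the $w_j$ scaled so that the sums fall into gaps above all the old values.

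Finally we verify two things. First, the $x_j$ themselves receive distinct $F$-degrees, by giving them distinct numbers of incidences. Second, the gadget preserves the monotone order on the non-exceptional vertices, since it touches them only uniformly through the core. Together these give an $F$-irregular connected host with at least two vertices.
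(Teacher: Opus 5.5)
Your complete case matches the paper's: the vertex of largest $K_k$-degree has positive degree, so its component contains a $K_k$. Your outline for noncomplete $F$ also follows the paper's. However, the gadget step, which is where the real work lies, does not work as described.

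The weights $w_j$ are not free parameters that can be ``scaled''. The number of new copies through $s$ created by the edge $sx_j$ depends on $s$ and on the other gadget edges, since a single copy may use several of them. So there is no exact column-sum formula.

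Worse, if every $x_j$ is joined to the dominating core, each gadget edge contributes, to leading order, the same amount, of order $n^{k-2}$. That is the order of the gap $C_G(A_i)-C_G(A_{i+1})$ between consecutive chain vertices for typical $i$. It is also the order of the differences among the non-twin exceptional vertices near $e$. So the shifted values of $S$ land inside the chain, and nothing in your sketch rules out collisions there. Lifting them ``above all the old values'' would need order $n$ gadget edges per vertex. The $x_j$ have the same problem: distinct incidence numbers can separate them from each other, but not from the chain vertices whose neighbourhoods they nearly share.

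The paper's proof rests on three ingredients your proposal lacks (for $\delta\ge2$; when $\delta=1$ no gadget is needed).

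\emph{A twin-class attachment.} The gadget is attached only to an exact twin class, the cap $Q$ of $c=2\delta+1$ top clique vertices, so the tail-free counts $H_n$ of the cap vertices coincide exactly. $Q$ is made a twin class on purpose. At most one vertex may have host degree below $\delta$, because all such vertices have $F$-degree $0$. So the bottom of the chain must be anchored to a common set of at least $\delta$ vertices, which is why every $A_i$ is joined to $Q$.

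\emph{Bounded-degree gadget vertices.} The gadget vertices are tails $T_t$ of bounded prescribed degree $d_t=\delta+t-1$. A vertex of $F$ placed on a tail has all its neighbours in $Q$. This yields leading terms $K_n\binom{d_t}{\delta}$ for $T_t$ and $K_nW_j$ for $q_j$, where $w_t=\binom{d_t-1}{\delta-1}$. These terms are of order $n^{k-\delta-1}$, with errors $O(n^{k-\delta-2})$. The placement of $Q$ and of the tails relative to the chain comes from exact neighbourhood inclusion ($N(T_t)\subset Q=N(A_m)$, and $B_{n-c}$ below every $q_j$), not from fitting sums into gaps.

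\emph{Direct comparisons for the deleted edge.} The endpoints $B_1,B_4$ of the deleted edge are not separated by any gadget. They are handled by direct comparisons: a $G[S]$-degree count at order $n^{k-2}$, and the automorphism $(x\,y)(a\,b)$ of a pruned graph at order $n^{k-3}$.
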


The complete pattern is settled by the existence theorem of
Chartrand, Holbert, Oellermann and Swart~\cite{chartrand1987}, together with
the component reduction in Section~\ref{sec:prelim}; Sections~\ref{sec:matrix}--\ref{sec:cap} treat the
noncomplete case. The device is a rooted count $C_H(v)$, proportional to
$d_F^H(v)$ with a constant factor, so distinguishing $C_H$ suffices.
Most vertices are ordered by neighborhood inclusion in a threshold host with
one deleted edge (Section~\ref{sec:chain}). Two vertices displaced by that deletion
are reinserted by a local automorphism comparison (Section~\ref{sec:displaced}), and
the finitely many remaining vertices are separated by an incidence gadget with distinct weighted column sums. All error
terms are absorbed by taking the host large, as fixed in
Section~\ref{sec:size}.

\section{Preliminaries}
\label{sec:prelim}

All graphs are finite, simple and undirected; background terminology follows
Diestel \cite{diestel2017}. A graph is nontrivial if it has at least two vertices.
Subgraphs are not required to be induced; different edge sets on the same
vertex set can define different copies.

For every $k\ge3$, a finite nontrivial $K_k$-irregular graph exists
\cite{chartrand1987}; this result is also explicitly stated
in~\cite[p.~42]{chartrand1988}. To obtain a connected host, choose a vertex
of positive $K_k$-degree, which exists because a nontrivial irregular host
has at most one vertex of degree zero in this count. Its component contains
a copy of $K_k$ and is therefore nontrivial. Every copy of the connected
pattern containing a vertex of that component lies wholly in it, so
restriction preserves all its $K_k$-degrees and hence irregularity.

For the construction below, assume $F$ is noncomplete and connected,
write $k = \lvert V(F) \rvert$
and $\delta = \delta(F)$, and assume $k \ge 3$. A
shortest path between nonadjacent vertices furnishes an induced $P_3$, and the
endpoints of any nonedge have degree at most $k-2$, so $\delta \le k-2$.

For a host $H$, let $C_H(v)$ count injective homomorphisms $\varphi \colon F \to H$
whose image contains $v$. Each subgraph counted by $d_F^H(v)$ lifts to exactly
$\lvert \operatorname{Aut} F \rvert$ such maps, so
$C_H(v) = \lvert \operatorname{Aut} F \rvert \, d_F^H(v)$, and it suffices to
make the values $C_H(v)$ pairwise distinct.

Write $(x)_j = x(x-1)\dotsm(x-j+1)$ with $(x)_0 = 1$. The following estimate is
used throughout: if $U \subset V(H)$ has $M$ vertices and $\lvert V(H) \rvert = N$,
then the number of injective maps $V(F) \to V(H)$ meeting $U$ in at least $s$
vertices is at most
\begin{equation}
\label{eq:countbd}
\binom{k}{s} M^s N^{k-s}.
\end{equation}
Indeed, choose $s$ preimages, their images in $U$, and arbitrary images for the
rest; overcounting is harmless. Every appeal to \eqref{eq:countbd} below takes
$U$ to be a fixed finite exceptional set while $N \le 2n$ with $n$ as in
\eqref{eq:size}.
\section{A finite incidence matrix}
\label{sec:matrix}

Assume $\delta \ge 2$. Put $c = 2\delta+1$ and $r = \delta+1$, and construct
subsets $S_t \subset \{1,\dots,c\}$, $1 \le t \le r$, of sizes
$d_t = \delta+t-1$ as follows: start with $S_t = \{1,\dots,d_t\}$, then for each
$j = 1,\dots,\delta-1$ replace $j$ by $c$ in $S_{j+1}$.
Hence column $j < \delta$ is absent from exactly $S_{j+1}$, column $\delta$ lies in every $S_t$, column $\delta+s$ is absent from exactly $S_1,\dots,S_s$, and $c$ lies in exactly $S_2,\dots,S_\delta$.

Put $w_t = \binom{d_t-1}{\delta-1}$ and define the weighted column sums
$W_j = \sum_{t : j \in S_t} w_t$.

\begin{lemma}
\label{lem:columns}
The $c$ sums $W_j$ are pairwise distinct.
\end{lemma}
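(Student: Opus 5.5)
The plan is to compute every $W_j$ explicitly in terms of the total weight $T=\sum_{t=1}^{r} w_t$ and compare the \emph{deficits} $T-W_j$, which are the sums of $w_t$ over the sets $S_t$ \emph{missing} column $j$. The membership description stated just before Lemma~\ref{lem:columns} gives these deficits directly:
\begin{itemize}
\item column $\delta$ has deficit $0$;
\item column $j<\delta$ has deficit $w_{j+1}$, so these realize $w_2,\dots,w_\delta$;
\item column $\delta+s$ ($1\le s\le\delta$) has deficit $P_s:=w_1+\dots+w_s$;
\item column $c$ lies in $S_2,\dots,S_\delta$ only, so its deficit is $w_1+w_{\delta+1}=w_{\delta+1}+1$.
\end{itemize}
It therefore suffices to show that these $2\delta+1$ numbers are pairwise distinct.

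The key facts come from binomial identities. Since $w_t=\binom{\delta+t-2}{\delta-1}$ and $\delta\ge2$, the weights are strictly increasing, with $w_1=1$ and $w_2=\delta$. By the hockey-stick identity,
\[
P_s=\binom{\delta+s-1}{\delta}, \qquad w_{s+1}=\binom{\delta+s-1}{\delta-1},
\]
so $P_s/w_{s+1}=s/\delta$. This gives $P_s<w_{s+1}$ for $s<\delta$ and $P_\delta=w_{\delta+1}$. Moreover, $P_s=P_{s-1}+w_s>w_s$ for $s\ge2$.

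These bounds interleave the values as
\[
0<P_1=1<w_2<P_2<w_3<\dots<w_\delta<P_\delta=w_{\delta+1}<w_{\delta+1}+1,
\]
using $w_s<P_s<w_{s+1}$ for $2\le s\le\delta-1$. The chain is strictly increasing, so all deficits, and hence all $W_j$, are distinct.

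The only delicate point is the interleaving of the prefix sums $P_s$ with the individual weights $w_t$. A priori some prefix sum could coincide with a later weight. The ratio identity $P_s/w_{s+1}=s/\delta$ settles this. I would also double-check the bookkeeping of which sets omit column $c$ (namely $S_1$ and $S_{\delta+1}$) and confirm the small case $\delta=2$ by hand. In that case the deficits are $0,2,1,3,4$.
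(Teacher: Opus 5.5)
Your proof is correct and follows the paper's argument. The paper likewise writes each $W_j$ as $T$, $T-w_{j+1}$, $T-P_s$, or (for column $c$) $T-w_1-w_r$, and uses the hockey-stick ratio $P_{t-1}=\frac{t-1}{\delta}w_t$ to get the interleaving $P_{t-1}<w_t<P_t$; your single increasing chain of deficits is a tidier packaging of its group-by-group comparison, and your bound $w_{\delta+1}+1>P_\delta$ plays the role of the paper's observation $U=w_r-1$.
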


\begin{proof}
Write $T = \sum_t w_t$, $P_s = \sum_{t \le s} w_t$ and
$U = \sum_{t=2}^{\delta} w_t$. A short inspection of the construction gives
\[
T - w_{j+1}\quad (1 \le j < \delta), \qquad T\quad (j = \delta),
\]
\[
T - P_s\quad (j = \delta+s,\ 1 \le s \le \delta), \qquad U\quad (j = c).
\]
For $2 \le t \le \delta$, the hockey-stick identity \cite{stanley2012} gives
$P_{t-1} = \frac{t-1}{\delta} w_t$, whence $P_{t-1} < w_t < P_t$; no $w_t$ from
the first group therefore equals any $P_s$. All sums but $U$ are at least $w_r$,
whereas $P_{\delta} = w_r$ and $w_1 = 1$ give $U = w_r - 1$. Distinctness within
each group is immediate from strict monotonicity of the $w_t$.
\end{proof}

\section{Construction and choice of size}
\label{sec:size}

For $\delta \ge 2$ use $c$, $r$ and the sets $S_t$ above; for $\delta = 1$ put
$c = 1$ and $r = 0$, so there are no further rows. Define
$C_3 = \binom{k}{3} 8^3$, $C_4 = \binom{k}{4} 8^4$ with $\binom{k}{4} = 0$ for
$k = 3$, and $C^* = \binom{k}{\delta+2}(3\delta+2)^{\delta+2}$ for
$\delta \ge 2$, $C^* = 0$ for $\delta = 1$. Choose
\begin{equation}
\label{eq:size}
n = 2(c+k+4) + 2^{2k}(1+C_3+C_4+C^*).
\end{equation}
Take a clique on $B_1,\dots,B_n$ and an independent set
$A_1,\dots,A_m$ with $m = n-c+1$ and $N(A_i) = \{B_i,\dots,B_n\}$.
Write $Q = \{B_{n-c+1},\dots,B_n\}$ as $\{q_1,\dots,q_c\}$.
For $\delta \ge 2$ add independent vertices $T_1,\dots,T_r$, joining $T_t$
to those $q_j$ with $j \in S_t$; put no edges among the $A_i$ and $T_t$.
Delete the edge $B_1B_4$. Call the result $G$ and put
$N = \lvert V(G) \rvert \le 2n$.

By \eqref{eq:size}, the falling factorials $(n-4)_{k-2}$,
$(n-4)_{k-3}$ and, when $\delta\ge2$, $(n-c-4)_{k-\delta-1}$
are at least $(n/2)$ to their respective degrees, with strict inequality
for positive degree. Since $n>2^{2k}(1+C_3+C_4+C^*)$,
these bounds yield
\begin{equation}
\label{eq:gap3}
2\lvert E(F) \rvert (n-4)_{k-2} > 2C_3(2n)^{k-3},
\end{equation}
\begin{equation}
\label{eq:gap4}
(n-4)_{k-3} > 2C_4(2n)^{k-4} \qquad (k \ge 4),
\end{equation}
and, for $\delta \ge 2$ with $\ell = k-\delta-1$,
\begin{equation}
\label{eq:gapstar}
\delta!\,(n-c-4)_{\ell} > 2C^*(2n)^{\ell-1}.
\end{equation}
For $k = 3$ the error in \eqref{eq:gap4} is zero and the corresponding surplus
is $1$.
\section{Comparisons from neighborhood inclusion}
\label{sec:chain}

If $N(u) \setminus \{v\} \subset N(v) \setminus \{u\}$, replacing $u$ by $v$
injects the embeddings rooted at $u$ but not using $v$ into those rooted at $v$
but not using $u$. Embeddings containing both vertices contribute equally
to the two rooted counts and cancel in their difference.
A valid embedding through $v$ but not $u$ that uses an edge unavailable
at $u$, and lies outside the image of this injection, makes the inequality
strict. Applied to $G$ this yields
\begin{equation}
\label{eq:chain}
\begin{split}
\text{every } T_t < A_m < \dots < A_3 < A_2 < A_1 \\
< B_2 < B_3 < B_5 < \dots < B_{n-c} < \text{every } q_j,
\end{split}
\end{equation}
where $<$ compares $C_G$ values and the $T$ part is absent for $r = 0$.
Here are strict witnesses. Fix a vertex $a$ of $F$ of degree $\delta$.
For $A_{i+1}<A_i$ ($1\le i<m$), send $a$ to $A_i$, one neighbour to $B_i$,
and the other $\delta-1$ neighbours into $Q$. The edge $A_iB_i$ is
unavailable at $A_{i+1}$.

For consecutive listed vertices $B_i<B_j$, send $a$ to $A_j$, one
neighbour to $B_j$, and the other $\delta-1$ neighbours into $Q$.
Then $A_j$ is adjacent to $B_j$ but not $B_i$.
For $A_1<B_2$, use the same assignment with $a$ at $A_2$ and one
neighbour at $B_2$; the edge $A_2B_2$ is unavailable at $A_1$.

For $B_{n-c}<q_j$, send $a$ to $A_m$, one neighbour to $q_j$,
and the other $\delta-1$ neighbours into $Q\setminus\{q_j\}$.
The edge $A_mq_j$ is unavailable at $B_{n-c}$.
For $T_t<A_m$, choose $q_j\in Q\setminus N(T_t)$, which exists since
$d_t\le2\delta<c$, and use the same assignment. The edge $A_mq_j$
is unavailable at $T_t$.

Map the remaining vertices injectively into the clique
$B_5,\dots,B_{n-c-1}$, avoiding already used vertices and the smaller
comparison vertex. There are enough vertices by \eqref{eq:size}, and
$c\ge\delta$ supplies the required neighbours in $Q$. These $B$ images
and the images of the neighbours of $a$ form a clique avoiding the deleted
edge. Since all neighbours of $a$ have been assigned, every required edge
is preserved. Each witness avoids the smaller comparison vertex and
cannot be obtained by replacing it with the larger one.

\section{The two displaced vertices}
\label{sec:displaced}

Put $S = \{A_1,A_2,A_3,A_4,B_1,B_2,B_3,B_4\}$. Every vertex of $S$ has the same
neighbourhood outside $S$, namely the clique $B_5,\dots,B_n$. For $u,v \in S$,
embeddings meeting $S$ exactly once contribute equally to $C_G(u)$ and $C_G(v)$.
Those meeting $S$ exactly twice contribute a difference of
$(\deg_{G[S]}(u) - \deg_{G[S]}(v)) L_n$ with
$L_n \ge 2\lvert E(F) \rvert (n-4)_{k-2}$: for fixed second vertex of $S$ the
count depends only on adjacency of the two distinguished vertices, and the
adjacent case gains at least the maps sending an ordered edge of $F$ to them
with everything else in $B_5,\dots,B_n$. By \eqref{eq:countbd}, embeddings
meeting $S$ at least three times differ by at most $2C_3(2n)^{k-3}$ in absolute
value. Since $\deg_S(B_1) - \deg_S(A_3) = \deg_S(B_4) - \deg_S(B_2) = 1$,
\eqref{eq:gap3} gives
\begin{equation}
\label{eq:degdiff}
C_G(A_3) < C_G(B_1) \quad\text{and}\quad C_G(B_2) < C_G(B_4).
\end{equation}

To place $B_1$ against $A_2$, set $x = B_1$, $y = A_2$, $a = A_1$, $b = B_4$ and
temporarily delete $bA_3$ and $bA_4$. The pruned graph admits $(x\,y)(a\,b)$ as
an automorphism, so the two rooted counts agree there. Restoring the two edges,
any newly valid embedding through either root uses an added edge. With exactly
three vertices of $S$ these are the root, $b$ and $z \in \{A_3,A_4\}$; replacing
$x$ by $y$ preserves every required edge, because the external neighbourhoods
agree, neither root touches $z$, and $yb$ is present while $xb$ is absent. The
new $x$-embeddings thus inject into the new $y$-embeddings. An induced $P_3$ of
$F$ sent successively to $y,b,z$ with the rest in $B_5,\dots,B_n$ furnishes at
least $(n-4)_{k-3}$ embeddings outside that injection. Embeddings meeting $S$
at least four times differ by at most $2C_4(2n)^{k-4}$, and by $0$ for $k = 3$.
With \eqref{eq:gap4},
\begin{equation}
\label{eq:b1a2}
C_G(B_1) < C_G(A_2).
\end{equation}
For $B_4$ against $B_3$ put instead $x = B_4$, $y = B_3$, $a = A_4$, $b = B_1$
and delete $bA_1$, $bB_2$; again $(x\,y)(a\,b)$ is an automorphism. The same
argument applies with $z \in \{A_1,B_2\}$, both roots now adjacent to $z$ while
$yb$ is present and $xb$ absent, and the same induced $P_3$ witnesses strictness
since extra host edges are permitted. Hence \eqref{eq:gap4} gives
\begin{equation}
\label{eq:b4b3}
C_G(B_4) < C_G(B_3).
\end{equation}
Together \eqref{eq:degdiff}, \eqref{eq:b1a2} and \eqref{eq:b4b3} refine \eqref{eq:chain} to
\begin{equation}
\label{eq:chainend}
A_4 < A_3 < B_1 < A_2 < A_1 < B_2 < B_4 < B_3.
\end{equation}
\section{Tails and the cap}
\label{sec:cap}

Only $\delta \ge 2$ needs this step. Put $D = Q \cup \{T_1,\dots,T_r\}$,
$R = V(G) \setminus D$ and $\ell = k-\delta-1 \ge 1$. The set $Q$ is a clique
complete to $R$: each cap vertex touches every $B$ outside $Q$ and every $A_i$.
Define
\[
K_n = \delta! \sum_{a : \deg_F(a) = \delta} \operatorname{inj}(F - N_F[a], G[R]),
\]
with $\operatorname{inj}$ counting injective homomorphisms and $N_F[a]$ the
closed neighbourhood. Since $B_5,\dots,B_{n-c}$ is a clique in $R$,
\begin{equation}
\label{eq:kn}
K_n \ge \delta!\,(n-c-4)_{\ell}.
\end{equation}
The rooted counts decompose as
\begin{equation}
\label{eq:tail}
C_G(T_t) = K_n \binom{d_t}{\delta} + E_t,
\end{equation}
\begin{equation}
\label{eq:cap}
C_G(q_j) = H_n + K_n W_j + E_j^{\prime},
\end{equation}
where $H_n$ is independent of $j$ and every error satisfies
\begin{equation}
\label{eq:err}
0 \le E_t, E_j^{\prime} \le C^*(2n)^{\ell-1}.
\end{equation}
Indeed, a vertex of $F$ sent to a tail forces all its neighbours into $Q$, so
any embedding meeting a tail uses at least $\delta+1$ vertices of $D$. With
exactly $\delta+1$, there is one tail image whose preimage $a$ has degree
$\delta$, its $\delta$ neighbours land exactly on its $Q$ neighbours, and the
rest lands in $R$. For \eqref{eq:tail} the $\delta$ neighbours can be assigned
in $(d_t)_{\delta}$ ways; for \eqref{eq:cap} the cap root is one of those
neighbours, giving $\delta(d_t-1)_{\delta-1}$ assignments when $q_j$ touches
$T_t$ and $0$ otherwise. These are exactly the displayed $K_n$ coefficients.
Any further embedding meeting a tail uses at least $\delta+2$ vertices of $D$;
in particular two tail images force $\delta$ further distinct $Q$ images.
With $\lvert D \rvert = 3\delta+2$, \eqref{eq:countbd} gives \eqref{eq:err}.
Embeddings avoiding all tails contribute the common $H_n$ in \eqref{eq:cap},
since the cap vertices are interchangeable once the tails are removed.

The integers $\binom{d_t}{\delta}$ grow strictly in $t$, and the $W_j$ are
distinct by Lemma~\ref{lem:columns}. Hence \eqref{eq:kn}--\eqref{eq:err} with
\eqref{eq:gapstar} separate every pair of tails and every pair of cap vertices.
With \eqref{eq:chain} and \eqref{eq:chainend} this orders every vertex of $G$
by $C_G$, so the $F$-degrees are pairwise distinct. The graph $G$ is finite and
connected and has at least two vertices, proving the noncomplete case;
with the complete case established in Section~\ref{sec:prelim} this proves
Theorem~\ref{thm:main}.

\begin{corollary}
\label{cor:infinite}
For every finite connected noncomplete graph $F$ with at least three vertices,
there exist infinitely many pairwise non-isomorphic finite connected
$F$-irregular graphs.
\end{corollary}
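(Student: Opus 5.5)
The plan is to observe that the construction of Sections~\ref{sec:size}--\ref{sec:cap} works for every $n$ at least the value fixed in \eqref{eq:size}, not just for that value. Write $n_0$ for the right-hand side of \eqref{eq:size}. For each integer $n \ge n_0$, build the graph $G = G_n$ exactly as in Section~\ref{sec:size}: the clique $B_1,\dots,B_n$, the independent set $A_1,\dots,A_{n-c+1}$ with nested neighbourhoods, the tails $T_t$ attached to $Q$ through the sets $S_t$ when $\delta\ge2$, and the deleted edge $B_1B_4$. The vertex count $\lvert V(G_n)\rvert = 2n-c+1+r$ strictly increases with $n$. Hence the $G_n$ are pairwise non-isomorphic, and it remains to check that each $G_n$ is connected and $F$-irregular.

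Connectivity is clear. Every $A_i$ and every $T_t$ is adjacent to a vertex of $Q$, and the clique minus one edge is connected. For irregularity I will go back through the argument and check that $n$ entered only through lower bounds. The estimate \eqref{eq:countbd} only requires $N\le 2n$, which still holds. The exceptional sets $S$ and $D$ have fixed sizes $8$ and $3\delta+2$, independent of $n$, so $C_3$, $C_4$, $C^*$ do not change. The neighbourhood-inclusion witnesses of Section~\ref{sec:chain} need enough room in $B_5,\dots,B_{n-c-1}$, and this only improves as $n$ grows. The local automorphism arguments of Section~\ref{sec:displaced} and the decomposition \eqref{eq:tail}--\eqref{eq:cap} are purely structural, and Lemma~\ref{lem:columns} does not depend on $n$.

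The main point to verify is that \eqref{eq:gap3}, \eqref{eq:gap4} and \eqref{eq:gapstar} persist for all $n\ge n_0$. I would redo the derivation with $n$ in place of the specific value. The inequality $n \ge 2(c+k+4)$ still gives $(n-c-4)_{j}\ge (n/2)^{j}$ for the relevant degrees $j\le k$, since each factor is at least $n-c-4-k\ge n/2$; the same holds for $(n-4)_j$. Then, for example, \eqref{eq:gap4} reduces to $(n/2)^{k-3} > 2C_4 (2n)^{k-4}$, that is, $n > 2^{2k-5}\cdot 2C_4$ up to a power of two bounded by $2^{2k}$. This holds because $n > 2^{2k}(1+C_3+C_4+C^*)$. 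The other two gaps reduce in the same way: each left side is a polynomial in $n$ of degree one higher than its right side, so the inequality is monotone in $n$ once it holds. The degenerate cases, $k=3$ for \eqref{eq:gap4} and $\ell=1$ for \eqref{eq:gapstar}, give constant error terms against a positive surplus and need no growth at all.

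With the gaps in place, the chain \eqref{eq:chain}, its refinement \eqref{eq:chainend}, and the tail and cap separation of Section~\ref{sec:cap} go through verbatim. Each $G_n$ is therefore $F$-irregular, and the family $\{G_n : n\ge n_0\}$ proves the corollary.
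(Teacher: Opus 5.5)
Your proposal is correct and follows the paper's proof: fix $F$, $c$, $r$, let $n$ range over all integers at least the value in \eqref{eq:size}, and note that \eqref{eq:gap3}--\eqref{eq:gapstar} were derived only from the lower bounds $n\ge 2(c+k+4)$ and $n>2^{2k}(1+C_3+C_4+C^*)$, so they persist for larger $n$; the hosts are then distinguished by $\lvert V(G_n)\rvert=2n-c+1+r$. One caveat on your extra detail: the remark that such an inequality is ``monotone in $n$ once it holds'' is not by itself a valid justification, and what actually carries the step is your re-derivation from those two lower bounds.
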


\begin{proof}
Keep $F$, $c$ and $r$ fixed and let $n$ range over all integers at least
the value in \eqref{eq:size}. The same bounds and strict comparisons apply
for every such $n$, so each resulting host $G_n$ is connected and
$F$-irregular. Since $\lvert V(G_n)\rvert=2n-c+1+r$, distinct values of $n$
give non-isomorphic hosts.
\end{proof}

\section{Tool and computational resource disclosure}

ChatGPT~6 Astra (OpenAI) was accessed in chat to generate the proof.
ChatGPT~5.6 Sol was accessed through codex to write portions of the Lean~4
development described in Section~\ref{sec:formal}\footnote{On the emerging
conventions for such disclosure, see the Leiden Declaration on Artificial
Intelligence and Mathematics~\cite{leiden2026}}.
All statements and proofs were subsequently verified independently by the
author. The formal development is checked by the Lean kernel, and the Lean
statements have been confirmed by hand to faithfully restate the corresponding
statements of this paper. The author alone is responsible for the correctness
of the arguments and for the accuracy and completeness of the citations.

\section{Formal verification}
\label{sec:formal}

The proof of Theorem~\ref{thm:main} is formalized in Lean~4~\cite{demoura2021} using
mathlib~\cite{mathlib2020}, with one custom axiom:
\path{complete_pattern_irregular_host}. This records the published
complete-pattern existence theorem of Chartrand, Holbert, Oellermann and
Swart~\cite{chartrand1987}, also stated in~\cite[p.~42]{chartrand1988}:
it supplies a finite nontrivial $F$-irregular host
whenever $F$ is complete and has at least three vertices. Lean proves that
such a host can be restricted to a connected component. The public endpoint
is \path{FIrregular.f_irregular_conjecture}.

Everything else in the proof of Theorem~\ref{thm:main} is proved in Lean. This includes the
non-induced rooted-copy semantics and automorphism quotient, the explicit
host and incidence matrix, the minimum-degree-one and higher-degree cases,
the count decompositions and error bounds, every strict comparison, and the
final connected host witness for noncomplete $F$. Thus, relative to the one
axiom above, the formalization is sufficient to derive the theorem stated in
this paper. The varying-$n$ consequence in Corollary~\ref{cor:infinite}
is not included in the current formalization.

The guarded audit in \texttt{AxiomAudit.lean} verifies that the noncomplete
endpoint uses no custom axioms and that the public theorem uses exactly
\path{complete_pattern_irregular_host}, in addition to Lean's standard
foundations. The sources contain no \texttt{sorry}, \texttt{admit}, or
\texttt{native\_decide}. Running \texttt{lake build} under Lean~4.30.0 and
mathlib~v4.30.0 reproduces the check.
\url{https://github.com/jamesschreib/f-irregular-graph-conjecture}
is the public development repository~\cite{schreib2026}.

\end{document}